\documentclass{amsart}
\usepackage{amsfonts}
\usepackage{amsmath,amssymb}
\usepackage{amsthm}
\usepackage{amscd}
\usepackage{graphics}
\usepackage{graphicx}

\theoremstyle{remark}{
\newtheorem{Def}{{\rm Definition}}
\newtheorem{Ex}{{\rm Example}}
\newtheorem{Rem}{{\rm Remark}}
\newtheorem{Prob}{{\rm Problem}}

}
\theoremstyle{plain}
{
\newtheorem{Cor}{Corollary}
\newtheorem{Prop}{Proposition}
\newtheorem{Thm}{Theorem}

}

\begin{document}
\title[A note on shapes associated to two smooth real-valued functions]{A note on asymptotic behaviors and topological properties of two smooth real-valued functions and several graphs associated to them}
\author{Naoki kitazawa}
\keywords{Smooth, real analytic, or real algebraic (real polynomial) functions and maps. Reeb spaces. Cell complexes. Graphs. Digraphs. Reeb graphs. \\
\indent {\it \textup{2020} Mathematics Subject Classification}: Primary~26E05, 54C30, 57R45, 58C05.}

\address{Osaka Central Advanced Mathematical Institute (OCAMI) \\
3-3-138 Sugimoto, Sumiyoshi-ku Osaka 558-8585
TEL: +81-6-6605-3103
}
\email{naokikitazawa.formath@gmail.com}
\urladdr{https://naokikitazawa.github.io/NaokiKitazawa.html}
\maketitle
\begin{abstract}

This is a note on the graphs of two smooth real-valued functions in the plane with no intersection and the natural map onto the region surrounded by them with the canonical projection to the line composed, yielding its {\it Reeb space}. 

The Reeb space of a real-valued function on a topological space is the set of all connected components of all level sets and topologized naturally. Such spaces have been fundamental and strong tools in theory of Morse functions and its variants, since the former half of the 20th century. They are graphs for tame functions such as Morse(-Bott) functions.

The author has launched and studied the problem above, since the 2020s. He is interested in Reeb spaces of smooth or real analytic non-proper functions.
For closed manifolds and nice compact spaces, topological properties and combinatorial ones on Reeb spaces have been investigated by Gelbukh, Saeki, and so on.

\end{abstract}
%【REVISE】 combinatoric ～ is → combinatorial object. It is .
%【REVISE】  such that a point is a vertex if and only if the corresponding connected component of the level set contains some singular points → whose vertex set is the set of all points containing some singular points in the corresponding connected component of the level set .
%【REVISE】 We delete "extending the result before".
\section{Introduction.}
\label{sec:1}
This is a note on the following problem, started in \cite{kitazawa9}.
\begin{Prob}
\label{prob:1}
Let $c_i:\mathbb{R} \rightarrow \mathbb{R}$ ($i=1,2$) be real-valued smooth functions in the real plane ${\mathbb{R}}^2$ such that the graphs $\{(c_1(x),x) \mid x \in \mathbb{R}\}$ and $\{(c_2(x),x) \mid x \in \mathbb{R}\}$ do not intersect. Let $D_{c_1,c_2}:=\{(x_1,x_2) \mid c_1(x_2)<x_1<c_2(x_2)\}$. For each integer $m \geq 2$ we have an $m$-dimensional smooth submanifold $X_{m,c_1,c_2}:=\{(x_1,x_2,(y_j)_{j=1}^{m-1})\mid  (x_1-c_1(x_2))(c_2(x_2)-x_1)-{\Sigma}_{j=1}^{m-1} {y_j}^2=0\}$ with no boundary of the ($m+1$)-dimensional Euclidean space ${\mathbb{R}}^{m+1}$. By restricting the canonical projection ${\pi}_{m+1,2}$ to the $m$-dimensional submanifold, we have a smooth map onto the closure $\overline{D_{c_1,c_2}}:=\{(x_1,x_2) \mid c_1(x_2) \leq x_1 \leq c_2(x_2)\}$: let ${\pi}_{k_1+k_2,k_1}:{\mathbb{R}}^{k_1+k_2} \rightarrow {\mathbb{R}}^{k_1}$ denote the canonical projection defined by ${\pi}_{k_1+k_2,k_1}(x_1,x_2):=x_1$. The fact that this is a smooth manifold is reviewed in the next section. We consider the composition of the previous map onto $\overline{D_{c_1,c_2}} \subset {\mathbb{R}}^2$ with ${\pi}_{2,1}$. This function is also represented as ${\pi}_{m+1,1} {\mid}_{X_{m,c_1,c_2}}$, where for a map $c:X \rightarrow Y$ and a subset $Z \subset X$, we use $c {\mid}_Z$ for the restriction of $c$ to $Z$.

Investigate topological properties and combinatorial ones of $c:={\pi}_{m+1,1} {\mid}_{X_{m,c_1,c_2}}:X:=X_{m,c_1,c_2} \rightarrow Y:=\mathbb{R}$.
\end{Prob}

For this, we need the notion of {\it Reeb space} $R_c$ of a continuous function $c:X \rightarrow \mathbb{R}$ on a topological space $X$. Reeb spaces are main tools and objects here.

We can define an equivalence relation ${\sim}_c$ on $X$ by the following rule. For two points $x_1$ and $x_2$, $x_1 {\sim}_c x_2$ if and only if they are in a same connected component ({\it contour}) of a same {\it level set} $c^{-1}(y)$. 
The quotient space $R_c:=X/{{\sim}_c}$ is the {\it Reeb space} of c and we have the quotient map $q_c:X \rightarrow R_c$ and the unique continuous function $\bar{c}$ with $c=\bar{c} \circ q_c$.
This has appeared in \cite{reeb}. Since this, Reeb spaces have been fundamental and strong tools in investigating manifolds by Morse(-Bott) functions and naturally generalized functions.

Reeb spaces are, for such tame functions, regarded as graphs. For the Morse function case see \cite{izar} and the Morse-Bott function case see \cite{martinezalfaromezasarmientooliveira}. General theory on graph structures and rigorous theory on the fact that the Reeb spaces of real-valued continuous functions on nice compact topological spaces are $1$-dimensional and topologically nice are \cite{gelbukh1, gelbukh2, gelbukh3, gelbukh4, saeki1, saeki2}.

For example, realization of finite graphs as Reeb spaces of nice smooth functions on closed manifolds has been fundamental and important problems. Sharko has launched this in 2006 (\cite{sharko}) and Masumoto-and Saeki follow this (\cite{masumotosaeki}). They have reconstructed nice smooth functions on closed surfaces with given Reeb spaces. First, Sharko has considered a certain class of finite graphs and reconstruction of a smooth function $c$ each critical point $p$ of which is of the form $c(z)={\Re}(z^l)+c(p)$ or a critical point of a Morse function: here $z$ is a complex number, $p$ is identified naturally with $0$ by a suitable local coordinate, and $l>1$ is an integer. Masumoto and Saeki have extended his theory for arbitrary graphs and constructed smooth functions whose critical points may not be isolated. Michalak has studied reconstruction of Morse functions on closed manifolds whose general contours are spheres, in \cite{michalak}. Later, the author has contributed to this, respecting not only graphs, but also topologies of contours. Related important studies are \cite{kitazawa1, kitazawa4}. In addition, \cite{kitazawa2} is a study on the {\it non-compact} manifold case and the so-called {\it non-proper} case. \cite{kitazawa3} is a pioneering study on reconstruction in the real algebraic situation and a method used there is also important in constructing the map into ${\mathbb{R}}^2$ in Problem \ref{prob:1}. This real algebraic case is followed by the author himself in various preprints such as \cite{kitazawa5, kitazawa6}, where we do not assume related knowledge. 

Recently, the author has launched studies on Reeb spaces which are not finite graphs. He has been interested in explicit real analytic functions or densely real analytic ones. \cite{kitazawa7, kitazawa8} are preprints on pioneering cases, where we discuss related facts and arguments in a self-contained way in the present paper without assuming related knowledge. In Problem \ref{prob:1}, we respect and generalize the situation of \cite[Theorem 1]{kitazawa7}.

This paper is a note and a kind of remarks on Problem \ref{prop:1} and \cite{kitazawa9, kitazawa10}. We investigate local structures of Reeb spaces $R_c$ of the functions $c:={\pi}_{m+1,1} {\mid}_{X_{m,c_1,c_2}}$ and global structures of the functions roughly (Theorems \ref{thm:1}, \ref{thm:2} and \ref{thm:3}). It is important that we have Reeb spaces $R_c$ of the function of Problem \ref{prob:1} which are not of types we have discussed previously. We encounter Reeb spaces which are not CW complexes, for example. We also present new cases which are not presented in \cite{kitazawa9, kitazawa10} and which are examples related to the present paper (Theorem \ref{thm:4}). The 2nd section is for preliminaries and fundamental notions on singularity of differentiable maps, graphs and (CW) complexes and graph structures of Reeb spaces, are reviewed. In the 3rd section, our main result such as Theorems \ref{thm:1}, \ref{thm:2}, \ref{thm:3}, and \ref{thm:4} is presented.

\section{Preliminaries.}

%Let $\mathbb{Z} \subset \mathbb{R}$ denote the set of all integers and $\mathbb{N}$ denote the set of all positive integers.
A map between topological spaces $c:X \rightarrow Y$ is {\it proper} if the preimage $c^{-1}(K)$ is compact for any compact subset $K$ of $Y$. A {\it non-proper} map is a map between topological spaces which is not proper.
\subsection{Differentiable manifolds and maps.}
For a differentiable manifold $X$, the tangent vector space of at $p \in X$ is denoted by $T_p X$. This is a real vector space of dimension same as that of $X$. For a differentiable map $c:X_1 \rightarrow X_2$ between differentiable manifolds, a {\it singular} point of $c$ means a point $p \in X$ where the rank of the differential ${dc}_p:T_p X_1 \rightarrow T_{c(p)} X_2$, which is a linear map, is smaller than both the dimension of $X_1$ and that of $X_2$. Let $S(c)$ denote the set of all singular points of $c$ and we call it the {\it singular set} of $c$. 
We call the image $c(S(c))$ the {\it singular value set} of $c$. %We call $c(p)$ ($p \in S(c)$) a {\it singular value} of $c$.
If $X_2$ is of $1$-dimensional, then, we use "{\it critical}" instead of "singular".

Let
$D^n:=\{x \in {\mathbb{R}}^n \mid {\Sigma}_{j=1}^n {x_j}^2 \leq 1\}$ denote the $n$-dimensional unit disk.
Let $S^{n-1}$ denote its boundary $\{x \in {\mathbb{R}}^n \mid {\Sigma}_{j=1}^n {x_j}^2 = 1\}$, which is the ($n-1$)-dimensional unit sphere. Let $D^0:=\{0\}$ denote a one-point set. We also use $S^{-1}$ for the empty set $\emptyset$.

\subsection{(CW) complexes and graphs.}

A {\it cell complex} $\{e_{j,{\lambda}_j}:D^j-S^{j-1} \rightarrow X\}_{{\lambda}_j \in \Lambda}$ means a family of maps into a Hausdorff space $X$ satisfying the following. We call each map $e_{j,{\lambda}_j}$ a {\it cell} of it. We also call its image a {\it cell}. We call $X$ the {\it underlying space} of the cell complex. 
\begin{itemize}
\item The images of distinct maps $e_{j,{\lambda}_j}$ are mutually disjoint. The disjoint union of the images of all maps $e_{j,{\lambda}_j}$ is $X$.
\item The restriction of each map $e_{j,{\lambda}_j}$ to $D^j-S^{j-1}$ is a homeomorphism onto a subspace of $X$. 
\item The closure of the image of each map $e_{j,{\lambda}_j}$ taken in $X$ is the disjoint union of the image and the images of several maps $e_{j^{\prime},{\lambda}_j}$ with $j^{\prime}<j$.  
\end{itemize}
Each map $e_{j,{\lambda}_j}$ and its image is a {\it $j$-cell}.
A complex is {\it locally finite} if each point $x \in X$ is contained in images of finitely many maps $e_{j,{\lambda}_j}$.
We can consider a {\it finite complex} as a cell complex the number of all maps $e_{j,{\lambda}_j}$ into $X$ is finite.
A {\it subcomplex} of a cell complex is defined naturally by considering a suitable subspace $Y \subset X$.
A CW {\it complex} is a cell complex satisfying the following.
\begin{itemize}
\item In the underlying space $X$, the closure of each cell is a disjoint union of finitely many cells.  
\item A subset $O$ of $X$ is open in $X$ if and only if the set $O \bigcap e_{j,{\lambda}_j}(D^j-S^{j-1})$ is open in the subspace $e_{j,{\lambda}_j}(D^j-S^{j-1}) \subset X$.
\end{itemize}
For a CW complex, we can define its dimension as the maximal $j$ for $e_{j,{\lambda}_j}$ or $+\infty$. This is a topological invariant for topological spaces homeomorphic to a CW complex. Smooth manifolds and so-called PL manifolds are of such a space. For related theory, see \cite{hatcher} for example.

A {\it graph} means a $1$-dimensional locally finite CW complex such that the closure of each $1$-cell is homeomorphic to $D^1$. A {\it graph with ends} or an {\it E-graph} means a $1$-dimensional locally finite CW complex such that the closure of each cell is homeomorphic to $D^1$, $\{x>0\}$, or $\mathbb{R}$.

An {\it  almost graph} ({\it with ends}) or an {\it A-graph} (resp. {\it E-A-graph}) means a $1$-dimensional CW complex such that the closure of each $1$-cell is homeomorphic to $D^1$ (resp. $D^1$, $\{x>0\}$, or $\mathbb{R}$). 

A {\it  weakly almost graph} ({\it with ends}) or a {\it W-A-graph} (resp. an {\it E-W-A-graph}) means a $1$-dimensional cell complex such that the closure of each $1$-cell is homeomorphic to $D^1$ (resp. $D^1$, $\{x>0\}$, or $\mathbb{R}$). This is the widest among the presented classes of cell complexes.

For an E-W-A-graph, we use a single character $G$ for example and a $1$-cell ($0$-cell) is also called an {\it edge} (resp. a {\it vertex}). 

We can consider an oriented version if there exists a continuous real-valued function $c_G:G \rightarrow \mathbb{R}$ on an E-W-A-graph $G$ whose restriction to each $1$-cell is injective. More rigorously, an edge $e_v$ incident to a vertex $v$ is oriented as an edge departing from (entering) $v$ if the restriction of $c_G$ to the closure of $e_v$ in the graph has the minimum (resp. maximum) at $v$. For the pair $(G,c_G)$, we use "{\it digraph}" instead of "graph", in these terminologies.
\subsection{Reeb spaces and Reeb (di)graphs.}
For a non-wild proper smooth function $c:X \rightarrow \mathbb{R}$ on a manifold $X$ with no boundary such that $c(S(c))$ is discrete and closed in $\mathbb{R}$, $R_c$ is a graph (with ends) whose vertex set consists of all points for critical contours of $c$ and the {\it Reeb graph} of $c$ (\cite[Theorem 3.1]{saeki1}). Note that Reeb graphs are classical tools and have been strong tools in understanding manifolds by nice smooth functions, since pioneering studies such as \cite{reeb}. 
For the Reeb graph $G:=R_c$ with $c_G:=\bar{c}$, the pair $(R_c,\bar{c})$ is a digraph. This is the {\it Reeb digraph} of $c$. In this paper, we consider Reeb spaces which are not homeomorphic to any E-graph or any E-A-graph and are E-W-A-(di)graphs by this rule, first. We also call such a graph the {\it Reeb} ({\it di}){\it graph} of $c$. 
\section{Our main result.}
First, we review a proof of the fact that $X_{m,c_1,c_2}$ in Problem \ref{prob:1} is a smooth submanifold with no boundary of ${\mathbb{R}}^{m+1}$ again, as we have done in \cite{kitazawa9, kitazawa10}. Its main ingredient is essentially in \cite{kitazawa3} and this explicit case respects the case of \cite[Theorem 1]{kitazawa7}.

We can easily see that this is the zero set of the function $(x_1-c_1(x_2))(c_2(x_2)-x_1)-{\Sigma}_{j=1}^{m-1} {y_j}^2$. For this, we use implicit function theorem. More precisely, we calculated the rank of the differential of the function at each point of the zero set. If $(x_1,x_2) \in D_{c_1,c_2}$ and $(x_1,x_2,y) \in X_{m,c_1,c_2}$, then the value of the derivative of the function by some $y_j$ is not zero. If $(x_1,x_2) \in \overline{D_{c_1,c_2}}-D_{c_1,c_2}$ and $(x_1,x_2,y) \in X_{m,c_1,c_2}$, then the value of the derivative of the function by $x_1$ is not zero since either the relation $x_1-c_1(x_2)=0$ or $c_2(x_2)-x_1=0$ holds at the point. This means that the rank is $1$. We can apply implicit function theorem to see that  $X_{m,c_1,c_2}$ is an $m$-dimensional smooth submanifold with no boundary of ${\mathbb{R}}^{m+1}$.

We restrict classes of cases of Problem \ref{prob:1}.

\begin{Def}
\label{def:1}

\begin{enumerate}
\item In Problem \ref{prob:1}, let each $S(c_i)$ be the disjoint union of connected sets in $\mathbb{R}$ diffeomorphic to a one-point set, $D^1$, $\{x>0\}$, or $\mathbb{R}$: note that a connected component of $S(c_i)$ is diffeomorphic to $\mathbb{R}$, if and only if $c_i$ is constant and satisfies $S(c_i)=\mathbb{R}$. In this case,  the quadruple $(c_1,c_2,D_{c_1,c_2},X_{m,c_1,c_2})$ is said to be {\it tame from the viewpoint of singularity theory} or {\it TS}.
\item  If $(c_1,c_2,D_{c_1,c_2},X_{m,c_1,c_2})$ is TS and $S(c_i)$ is discrete, then this is said to be discrete-closed or {\it DC}.
\item If $(c_1,c_2,D_{c_1,c_2},X_{m,c_1,c_2})$ is TS (DC) and $c_1(S(c_1) \bigcup c_2(S(c_2))$ is a discrete and closed set of the complement $\mathbb{R}-Z_{\rm F}$ of some discrete closed set $Z_{\rm F}$ of $\mathbb{R}$, then it is {\it normally TS} (resp .{\it normally DC}) or {\it NTS} (resp. {\it NDC}).
\item If $(c_1,c_2,D_{c_1,c_2},X_{m,c_1,c_2})$ is NTS (NDC) and the set $Z_{\rm F}$ can be chosen to be empty, then it is {\it purely TS} (resp. {\it purely DC}) or {\it PTS} (resp. {\it PDC}).

\end{enumerate}
\end{Def}
We have Proposition \ref{prop:1}. Its proof is a kind of fundamental exercises. This is also in \cite{kitazawa9, kitazawa10}.
\begin{Prop}
\label{prop:1}
\begin{enumerate}
\item \label{prop:1.1} A point $p \in X_{m,c_1,c_2}$ is a critical point of the function ${\pi}_{m+1,1} {\mid}_{X_{m,c_1,c_2}}$ if and only if $p$ is of the form $p=(c_i(x)),x,{(0)}_{j=1}^{m-1})$ {\rm (}$x \in S(c_i)${\rm )}.
\item \label{prop:1.2} If two functions $c_i$ are Morse {\rm (}real analytic{\rm )}, then $(c_1,c_2,D_{c_1,c_2},X_{m,c_1,c_2})$ is {\it DC} and the function ${\pi}_{m+1,1} {\mid}_{X_{m,c_1,c_2}}$ is Morse {\rm (}resp. the restriction of the zero set of the presented real analytic function $(x_1-c_1(x_2))(c_2(x_2)-x_1)-{\Sigma}_{j=1}^{m-1} {y_j}^2${\rm )}.

Related to this, we call a contour of ${\pi}_{2,1} {\mid}_{\overline{D_{c_1,c_2}}}$ containing such a point a {\rm critical} contour of ${\pi}_{2,1} {\mid}_{\overline{D_{c_1,c_2}}}$.
\end{enumerate}
\end{Prop}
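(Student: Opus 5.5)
Set $F(x_1,x_2,y):=(x_1-c_1(x_2))(c_2(x_2)-x_1)-{\Sigma}_{j=1}^{m-1} {y_j}^2$, so that $X_{m,c_1,c_2}=F^{-1}(0)$ and, as reviewed above, $dF$ never vanishes on it. The plan for (\ref{prop:1.1}) is to use the Lagrange multiplier criterion. A point $p \in X_{m,c_1,c_2}$ is a critical point of ${\pi}_{m+1,1} {\mid}_{X_{m,c_1,c_2}}$ exactly when $dx_1$ vanishes on $T_pX_{m,c_1,c_2}=\ker dF_p$. This happens exactly when $dF_p$ is a multiple of $dx_1$, that is, when $\partial F/\partial x_2(p)=0$ and $\partial F/\partial y_j(p)=0$ for all $j$.

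We compute these partial derivatives. First, $\partial F/\partial y_j=-2y_j$, so every $y_j$ must vanish. Then $F=0$ forces $(x_1-c_1(x_2))(c_2(x_2)-x_1)=0$, so $x_1=c_i(x_2)$ for some $i$. Next,
\[
\partial F/\partial x_2=-{c_1}^{\prime}(x_2)(c_2(x_2)-x_1)+(x_1-c_1(x_2)){c_2}^{\prime}(x_2).
\]
Since the graphs do not intersect, we have $c_1(x_2)\neq c_2(x_2)$. At $x_1=c_1(x_2)$ the derivative therefore reduces to $-{c_1}^{\prime}(x_2)(c_2(x_2)-c_1(x_2))$, which vanishes if and only if $x_2\in S(c_1)$. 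The case $x_1=c_2(x_2)$ is symmetric. Reading the same computation backwards gives the converse, so (\ref{prop:1.1}) follows. The only subtle point is to justify that the critical points are detected by $dF_p \parallel dx_1$; this is standard because $dF_p\neq 0$.

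For (\ref{prop:1.2}), the DC property comes first. The critical points of a Morse function $c_i$ on $\mathbb{R}$ are nondegenerate, hence isolated, so $S(c_i)$ is discrete and closed, and each component is a point; hence the quadruple is TS and DC. In the real analytic case, ${c_i}^{\prime}$ is real analytic. Either ${c_i}^{\prime}\equiv 0$, so $c_i$ is constant and $S(c_i)=\mathbb{R}$ (allowed in TS), or its zeros are discrete and closed. Here I would note that the DC clause should be read as allowing the constant case, or else that case should be excluded.

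It remains to prove the Morse property, and this is the main computational step. Near a critical point $p=(c_1(a),a,0)$, the implicit function theorem with $\partial F/\partial x_1(p)=c_2(a)-c_1(a)\neq 0$ lets us write $X_{m,c_1,c_2}$ locally as a graph $x_1=g(x_2,y)$, using $(x_2,y)$ as coordinates. The function becomes $g$, and we compute its Hessian at $(a,0)$. Write $\delta:=c_2(a)-c_1(a)>0$ and expand $F$ around $p$: the terms are $(x_1-c_1(x_2))\delta$, then $-\sum y_j^2$, plus higher-order terms. From this we obtain
\[
g(x_2,y)=c_1(x_2)+\delta^{-1}\sum y_j^2+\text{(higher order)}.
\]
Its Hessian at $(a,0)$ is $\mathrm{diag}({c_1}^{\prime\prime}(a),2\delta^{-1},\dots,2\delta^{-1})$, which is nondegenerate because ${c_1}^{\prime\prime}(a)\neq 0$. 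The $c_2$ case is analogous, with sign $-2\delta^{-1}$. Hence the function is Morse.

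In the analytic case nothing beyond the statement is needed. The function is the restriction of the coordinate $x_1$ to the zero set of the real analytic polynomial-type function $F$, which is real analytic because the $c_i$ are.
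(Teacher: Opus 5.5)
Your argument is correct and is exactly the routine verification the paper leaves as a ``fundamental exercise'' (citing earlier work instead of writing it out): it extends the paper's own computation of $dF$, used to show that $X_{m,c_1,c_2}$ is a submanifold, to the Lagrange condition $dF_p\parallel dx_1$, and the local graph $x_1=g(x_2,y)$ then gives the Hessian $\mathrm{diag}(c_i^{\prime\prime}(a),\pm 2\delta^{-1},\dots,\pm 2\delta^{-1})$. Your caveat is also justified: a constant real analytic $c_i$ has $S(c_i)=\mathbb{R}$, so the quadruple is TS but not DC, and the DC assertion in the real analytic case holds only when $c_1$ and $c_2$ are nonconstant.
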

Theorem \ref{thm:1} is partially studied in \cite{kitazawa10}. The resulting Reeb graphs are $1$-dimensional CW complexes there. In short, Case 1-2, presented in its proof, later, does not appear, previously.

\begin{Thm}
\label{thm:1}
For an NTS quadruple $(c_1,c_2,D_{c_1,c_2},X_{m,c_1,c_2})$, the Reeb space $R_{{\pi}_{m+1,1} {\mid}_{X_{m,c_1,c_2}}}$ is homeomorphic to an E-W-A-graph. 
The function $\bar{{\pi}_{m+1,1} {\mid}_{X_{m,c_1,c_2}}}$ is represented as the composition of an $e_{R_{{\pi}_{m+1,1} {\mid}_{X_{m,c_1,c_2}}}}:R_{{\pi}_{m+1,1} {\mid}_{X_{m,c_1,c_2}}} \rightarrow {\mathbb{R}}^2$ with ${\pi}_{2,1}$.
The pair $(R_{{\pi}_{m+1,1} {\mid}_{X_{m,c_1,c_2}}},\bar{{\pi}_{m+1,1} {\mid}_{X_{m,c_1,c_2}}})$ is the Reeb digraph which is an E-A-digraph if $(c_1,c_2,D_{c_1,c_2},X_{m,c_1,c_2})$ is PTS. 

.
\end{Thm}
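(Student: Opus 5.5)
The plan is to reduce the Reeb space of $c:={\pi}_{m+1,1} {\mid}_{X_{m,c_1,c_2}}$ to the Reeb space of ${\pi}_{2,1} {\mid}_{\overline{D_{c_1,c_2}}}$. Then we read off an explicit cell structure from the local behavior of $c_1,c_2$ near their critical sets, which are controlled by the NTS condition.

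\textbf{Step 1.} We first show that $R_c$ is homeomorphic to the Reeb space of ${\pi}_{2,1} {\mid}_{\overline{D_{c_1,c_2}}}$, compatibly with the induced functions. The fiber of the projection $X_{m,c_1,c_2} \rightarrow \overline{D_{c_1,c_2}}$ over a point of $D_{c_1,c_2}$ is a sphere $S^{m-2}$ of radius $\sqrt{(x_1-c_1(x_2))(c_2(x_2)-x_1)}$. Over a boundary point it is a single point. Hence preimages of connected subsets of $\overline{D_{c_1,c_2}}$ are connected, and contours correspond bijectively. For $m=2$ the fiber $S^0$ is two points, but these are joined through the boundary, so the conclusion still holds.

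For the topology, we check that the induced bijection is a homeomorphism. The projection is a closed surjection, being proper with compact fibers, and quotient maps compose. Therefore the induced bijection is continuous in both directions.

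\textbf{Step 2.} We describe the Reeb space of ${\pi}_{2,1} {\mid}_{\overline{D_{c_1,c_2}}}$ concretely. Since the graphs do not intersect, the level set over $x_1=t$ is $\{x_2 \mid c_1(x_2) \leq t \leq c_2(x_2)\}$, a closed subset of $\mathbb{R}$. Its components are closed intervals, possibly unbounded, whose endpoints lie on $\{c_1=t\}$ or $\{c_2=t\}$.

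We define $e_{R}: R \rightarrow \mathbb{R}^2$ by sending the point of $R$ for a contour $C \subset \{x_1=t\}$ to $(t, \phi(C))$. Here $\phi$ is chosen continuous and injective on each level, for instance $\phi(C)=\arctan$ of the midpoint of $C$, with a suitable modification for unbounded $C$. Then $\bar c={\pi}_{2,1}\circ e_R$ holds by construction.

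\textbf{Step 3.} We build the cell structure. The vertices are the points of critical contours in the sense of Proposition \ref{prop:1} (\ref{prop:1.1}). By the TS condition, each component of $S(c_i)$ is a point, an arc, a half-line, or a line. On such a component $c_i$ is constant, so it lies in a single contour and yields a single vertex.

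Over any open interval of values $t$ containing no value of $c_1(S(c_1)) \cup c_2(S(c_2))$, the endpoints of contours move as graphs of local inverses of $c_i$, by the implicit function theorem. So contours vary in a product fashion, and the components of the noncritical part of $R$ are open arcs mapped injectively by $\bar c$. These arcs are the $1$-cells.

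The closure of each $1$-cell is obtained by following the contours as $t$ approaches the ends of its value interval. Either the contour converges to a critical contour, giving a vertex, or it escapes to infinity, giving an unbounded end. This yields closures homeomorphic to $D^1$, $\{x>0\}$, or $\mathbb{R}$, so $R$ is an E-W-A-graph and $(R,\bar c)$ is a digraph.

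\textbf{Step 4.} In the PTS case the critical values are discrete and closed in $\mathbb{R}$. We must then verify the two CW axioms: closure-finiteness and the weak topology.

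Closure-finiteness is immediate from Step 3, since each closed cell contains at most two vertices. For the weak topology, near a vertex at value $t_0$ we take a small interval around $t_0$ with no other critical values. We then show that a set open in each cell is open in $R$.

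The main obstacle is that infinitely many edges may enter one vertex, because the level set can have infinitely many components. Local finiteness therefore fails, and so does the weak topology in general. This is presumably the source of genuinely non-CW examples in the NTS-but-not-PTS case, where critical values accumulate at $Z_{\rm F}$.

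I would first check, using the behavior of $c_i$ on a compact window $\bar c^{-1}([t_0-\epsilon,t_0+\epsilon])$ of bounded $x_2$-range, that only finitely many edges are incident to a vertex within bounded $x_2$. I would then handle the unbounded $x_2$ directions separately, using the fact that a critical contour is a single interval.

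If this handling fails, I would settle for the cell-complex (E-W-A) conclusion alone in the NTS case, and prove the E-A statement only when the CW axioms can be checked at each vertex directly.
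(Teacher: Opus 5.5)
Your outline (reduce to the Reeb space of ${\pi}_{2,1}|_{\overline{D_{c_1,c_2}}}$, then read off cells from the local behaviour of $c_1,c_2$) matches the paper's, but there are genuine gaps. Write $q$ for the quotient map $\overline{D_{c_1,c_2}}\to R$.

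\textbf{Levels in $Z_{\rm F}$ and the PTS weak topology.} Steps 3--4 never examine the levels $p\in Z_{\rm F}$ at which critical values accumulate. A contour $C_p$ there may admit no window $[p-\epsilon,p+\epsilon]$ free of other critical contours. These are the paper's Case 1-2, the non-normal contours. Your product description never reaches them, and your claim that every component of the non-critical part of $R$ is an open arc whose ends tend to critical contours or escape is false.

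For example, take Theorem~\ref{thm:4}~(\ref{thm:4.1}) with $P(x)=\sin^2x$ and $a>0$. The contour $C_0=\{0\}\times\mathbb{R}$ contains no critical point, yet $q(C_0)$ is the limit of the vertices $q(C_{c_1(k\pi)})$ as $k\to-\infty$. The component of $R$ minus the critical-contour points that contains $q(C_0)$ is the half-open arc over $[0,a)$. So $(R,\bar c)$ is not a Reeb digraph, which is exactly Theorem~\ref{thm:2}~(\ref{thm:2.1}). The paper handles this by declaring every such $q(C_p)$ an extra vertex. That gives the E-W-A-graph, but not your digraph whose vertex set is the critical contours.

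Separately, Step 4 leaves the PTS claim unproved, and your suspicion that the weak topology fails at vertices of infinite degree is unfounded there. Suppose $(p-\epsilon,p+\epsilon)$ contains no critical value other than $p$. Then near each point of $C_p$ only finitely many components of $K(C_p)\setminus C_p$ occur (at most two on each side of $\{x_1=p\}$). Indeed, a strip trapped between two others would force a critical value of $c_1$ or $c_2$ in $(p-\epsilon,p+\epsilon)\setminus\{p\}$. Openness of $q^{-1}(U)$ is checked at points of $\overline{D_{c_1,c_2}}$, which has no points at infinity. Hence every $U\subset R$ meeting each closed edge in a relatively open set is open. So the weak topology holds however many edges are incident, for instance at the vertex of $\{0\}\times\mathbb{R}$ for $c_1\equiv-1$, $c_2(x)=\sin^2x$. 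The unbounded directions need no separate treatment. This is what underlies the paper's Case 1-1 description of $q(K(C_p))$ as a star of half-open arcs.

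\textbf{The map $e_R$.} Your midpoint choice in Step 2 makes $e_R$ discontinuous wherever a contour splits. Suppose $c_2$ has a strict local minimum of value $t_0$ at a point $y_0$ interior to a contour $[a,b]$ at level $t_0$. The two contours just above both converge in $R$ to $q(\{t_0\}\times[a,b])$. But their midpoints tend to $(a+y_0)/2$ and $(y_0+b)/2$, not to $(a+b)/2$. The paper instead sends each vertex to a chosen critical point $(c_i(x),x)$ with $x\in S(c_i)$, and each edge to a curve in $\overline{D_{c_1,c_2}}$ monotone in $x_1$.

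Be aware, though, that at a vertex of infinite degree carrying the weak topology, as in the $\sin^2$ example, $R$ is not first countable. So no topological embedding into $\mathbb{R}^2$ exists there. One can only ask for a continuous injection $e_R$ with $\bar c={\pi}_{2,1}\circ e_R$.

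\textbf{The case $m=2$.} Your $m=2$ remark in Step 1 is false when $c_1<t<c_2$ on all of $\mathbb{R}$. Then $\{t\}\times\mathbb{R}$ lies in $D_{c_1,c_2}$ and its preimage is two disjoint lines. For $c_1\equiv-1$, $c_2\equiv1$, the surface $X_{2,c_1,c_2}$ is a cylinder whose Reeb space is a circle. The Reeb space of ${\pi}_{2,1}|_{\overline{D_{c_1,c_2}}}$, by contrast, is $[-1,1]$. The paper's one-to-one correspondence of contours has the same defect, so the reduction needs $m\geq 3$ or the absence of such contours.
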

\begin{proof}

Each contour $C_p \subset {{\pi}_{2,1}}^{-1}(p) \bigcap \overline{D_{c_1,c_2}}$ of ${\pi}_{2,1} {\mid}_{\overline{D_{c_1,c_2}}}$ is homeomorphic to a one-point set, $D^1$, $\mathbb{R}$, or $\{x>0\}$ and ${{\pi}_{2,1}}^{-1}(p) \bigcap \overline{D_{c_1,c_2}}$ is the disjoint union of such sets and closed in ${{\pi}_{2,1}}^{-1}(p)$, which is diffeomorphic to $\mathbb{R}$. 
Contours $F_{C_p} \subset {{\pi}_{m+1,1}}^{-1}(p) \bigcap \overline{X_{m,c_1,c_2}}$ of ${\pi}_{m+1,1} {\mid}_{X_{m,c_1,c_2}}$ and the contours $C_p$ of ${\pi}_{2,1} {\mid}_{\overline{D_{c_1,c_2}}}$ correspond one-to-one.
For each contour $C_p$ of ${\pi}_{2,1} {\mid}_{\overline{D_{c_1,c_2}}}$, we can argue as follows.
\begin{itemize}
\item We can have the unique connected component $K(C_p)$ containing $C_p$ in a closed set $\overline{D_{c_1,c_2}} \bigcap \{p_{C_p,{\rm l}} \leq x \leq p_{C_p,{\rm L}}\}$ of the closed set  $\overline{D_{c_1,c_2}}$ with two suitable real numbers $p_{C_p,{\rm l}}$ and $p_{C_p,{\rm L}}$ such that $p_{C_p,{\rm l}}<p<p_{C_p,{\rm L}}$ and that the difference $p_{C_p,{\rm L}}-p_{C_p,{\rm l}}$ is sufficiently small. Of course $\overline{D_{c_1,c_2}}$ is a closed set of ${\mathbb{R}}^2$.

\item $K(C_p)$ does not contain any other contour ${C_p}^{\prime} \subset {{\pi}_{2,1}}^{-1}(p) \bigcap \overline{D_{c_1,c_2}}$ of the function ${\pi}_{2,1} {\mid}_{\overline{D_{c_1,c_2}}}$.
\end{itemize}

By considering distinct points $p \in {\pi}_{2,1}(\overline{D_{c_1,c_2}})$, we can see that distinct contours $C_p$ of the function ${\pi}_{2,1} {\mid}_{\overline{D_{c_1,c_2}}}$ are separated by connected and closed neighborhoods
in $\overline{D_{c_1,c_2}}$ and this guarantees that the spaces $R_{{\pi}_{2,1} {\mid}_{\overline{D_{c_1,c_2}}}}$ and $R_{{\pi}_{m+1,1} {\mid}_{X_{m,c_1,c_2}}}$ are Hausdorff. Note also that by our situations, these two Reeb spaces are homeomorphic and investigating the structure of an E-W-A-digraph of $R_{{\pi}_{2,1} {\mid}_{\overline{D_{c_1,c_2}}}}$ is essentially same as that of $R_{{\pi}_{m+1,1} {\mid}_{X_{m,c_1,c_2}}}$.

We consider two cases for choice of closed sets $\overline{D_{c_1,c_2}} \bigcap \{x \in \mid p_{C_p,{\rm l}} \leq x \leq p_{C_p,{\rm L}} \}$ and its connected component $K(C_p)$. Due to our definition and situation, $K(C_p)-C_p$ is a union of contours of ${\pi}_{2,1} {\mid}_{\overline{D_{c_1,c_2}}}$\\
 \ \\
Case 1-1 We can have $K(C_p)$ in such a way that $K(C_p)-C_p$ contains no critical contour of ${\pi}_{2,1} {\mid}_{\overline{D_{c_1,c_2}}}$.\\
For each connected component of $K(C_p)-C_p$, its closure in $K(C_p)$ must contain some point of $C_p$. If it does not contain one, then due to our definition and situation, the connected component must be closed and open in $K(C_p)$ and a connected component of $K(C_p)-C_p$ and this is a contradiction. By our definition and situation, each connected component of $K(C_p)-C_p$ is mapped onto a $1$-dimensional connected manifold which is non-compact and of the form $\{0 \leq x<1\}$ by the quotient map $q_{{\pi}_{2,1} {\mid}_{\overline{D_{c_1,c_2}}}}$. $K(C_p)$ is, by the quotient map, mapped onto a space obtained by identifying $1 \in \{0 \leq x \leq 1\}$ of all given copies of $\{0 \leq x \leq1\}$. A contour of ${\pi}_{2,1} {\mid}_{\overline{D_{c_1,c_2}}}$ which is a subset of $K(C_p)$ is mapped to the point identified with $1 \in \{0 \leq x \leq1\}$ if and only if it is a critical contour of ${\pi}_{2,1} {\mid}_{\overline{D_{c_1,c_2}}}$ and contained in $K(C_p)$. \\
 \ \\
Case 1-2 The case we cannot do as in Case 1-1. \\
Note that $p \in Z_{\rm F}$ must hold and that $C_p$ is not homeomorphic to a one-point set or $D^1$, for Case 1-2. In the PTS case, we cannot encounter Case 1-2. 
For each connected component of $K(C_p)-C_p$, its closure in $K(C_p)$ must contain some point of $C_p$ and this is due to the corresponding argument in Case 1-1.  \\
\ \\
Note that except sets $C_p$ of Case 1-2, which are named {\it $(c_1,c_2,D_{c_1,c_2},X_{m,c_1,c_2})$-non-normal sets}, in Theorem \ref{thm:2}, later, each contour of ${\pi}_{2,1} {\mid}_{\overline{D_{c_1,c_2}}}$ is mapped to a point in a local $1$-dimensional cell (CW) complex as in Case 1-1. According to the argument of Case 1-1, by removing all points $p_{C_p}$ some $C_p$ of Case 1-2 mapped to by the quotient map $q_{{\pi}_{2,1} {\mid}_{\overline{D_{c_1,c_2}}}}$, we have a $1$-dimensional CW-complex. By the argument, the Reeb space is homeomorphic to the natural $1$-dimensional cell complex, where the set of all points $p_{C_p}$ (, in Case 1-2,) mapped to by the quotient map are discrete in $R_{{\pi}_{m+1,1} {\mid}_{X_{m,c_1,c_2}}}$. In addition, these points $p_{C_p}$ can be defined as vertices of the E-W-A-graph $R_{{\pi}_{m+1,1} {\mid}_{X_{m,c_1,c_2}}}$ (which may not yield a Reeb-digraph). 

Case 1-2 also implies that the pair $(R_{{\pi}_{m+1,1} {\mid}_{X_{m,c_1,c_2}}},\bar{{\pi}_{m+1,1} {\mid}_{X_{m,c_1,c_2}}})$ may be an E-W-A-digraph which may not be an E-A-digraph, or that we cannot have the Reeb-digraph $(R_{{\pi}_{m+1,1} {\mid}_{X_{m,c_1,c_2}}},\bar{{\pi}_{m+1,1} {\mid}_{X_{m,c_1,c_2}}})$ with $R_{{\pi}_{m+1,1} {\mid}_{X_{m,c_1,c_2}}}$ being homeomorphic to an E-W-A-graph. In other words, any neighborhood of the uniquely defined point $p_{C_p}$ may contain other vertices of the digraph $(R_{{\pi}_{m+1,1} {\mid}_{X_{m,c_1,c_2}}},\bar{{\pi}_{m+1,1} {\mid}_{X_{m,c_1,c_2}}})$ in the former case and $R_{{\pi}_{m+1,1} {\mid}_{X_{m,c_1,c_2}}}$ may not have the structure of an E-W-A-digraph by the rule for vertices of Reeb (di)graphs in the latter case. 

Our desired embedding $e_{R_{{\pi}_{m+1,1} {\mid}_{X_{m,c_1,c_2}}}}:R_{{\pi}_{m+1,1} {\mid}_{X_{m,c_1,c_2}}} \rightarrow {\mathbb{R}}^2$ can be obtained by choosing corresponding points of the form $(c_i(x),x)$ ($x \in S(c_i)$) and connected curves satisfying the following.
\begin{itemize}
\item Each curve is the image of the embedding of an (oriented) edge of the graph $R_{{\pi}_{2,1} {\mid}_{\overline{D_{c_1,c_2}}}}$ and oriented canonically.
\item The image of the embedding is a subspace of $\overline{D_{c_1,c_2}}$.
\item Either the following holds for each (oriented) curve $C_e$.
\begin{itemize}
\item  $C_e$ is homeomorphic to $\{x>0\}$ and departs from a chosen point from these important points $(c_i(x),x)$ ($x \in S(c_i)$) and contains no such a point other than this.
\item $C_e$ is homeomorphic to $\{x<0\}$  and enters a chosen point from these important points $(c_i(x),x)$ ($x \in S(c_i)$) and contains no such a point other than this.
\item $C_e$ is homeomorphic to $D^1$ and connecting two suitably chosen points $(c_i(x),x)$ ($x \in S(c_i)$) from these important points and contains no such a point other than these two.
\item $C_e$ is homeomorphic to $\mathbb{R}$ and contains no point of the form $(c_i(x),x)$ ($x \in S(c_i)$). This occurs if and only if $S(c_1)$ and $S(c_2)$ are both empty.
\end{itemize}
\end{itemize}
This completes the proof.
\end{proof}
\begin{Thm}
\label{thm:2}	
In Theorem \ref{thm:1}, the following hold. Hereafter, we say that a contour $C_p$ of  $\bar{{\pi}_{2,1} {\mid}_{\overline{D_{c_1,c_2}}}}$ as in Case 1-2 of the proof of Theorem \ref{thm:1} and the naturally corresponding contour $F_{C_p}$ of ${\pi}_{m+1,1} {\mid}_{X_{m,c_1,c_2}}$ are {\rm $(c_1,c_2,D_{c_1,c_2},X_{m,c_1,c_2})$-non-normal}.
\begin{enumerate}
\item \label{thm:2.1} The pair $(R_{{\pi}_{m+1,1} {\mid}_{X_{m,c_1,c_2}}},\bar{{\pi}_{m+1,1} {\mid}_{X_{m,c_1,c_2}}})$ is the Reeb digraph and an E-W-A-digraph if and only if all $(c_1,c_2,D_{c_1,c_2},X_{m,c_1,c_2})$-non-normal sets $C_p$ are critical.
\item \label{thm:2.2}
The Reeb digraph of {\rm (}\ref{thm:2.1}{\rm )}, presented above, is not an E-A-digraph if and only if there exists some $(c_1,c_2,D_{c_1,c_2},X_{m,c_1,c_2})$-non-normal contour $C_p$ of  $\bar{{\pi}_{2,1} {\mid}_{\overline{D_{c_1,c_2}}}}$.
% In addition, the E-W-A-digraph which is not an E-A-digraph is homeomorphic to an E-A-graph if and only if for each $(c_1,c_2,D_{c_1,c_2},X_{m,c_1,c_2})$-non-normal set $C_p$ satisfies the following.
 \end{enumerate}

\end{Thm}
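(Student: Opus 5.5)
The plan is to work entirely with $R_{{\pi}_{2,1} {\mid}_{\overline{D_{c_1,c_2}}}}$. By the proof of Theorem \ref{thm:1} it is homeomorphic to $R_{{\pi}_{m+1,1} {\mid}_{X_{m,c_1,c_2}}}$, compatibly with the induced functions, and its contours correspond one-to-one with those of ${\pi}_{m+1,1} {\mid}_{X_{m,c_1,c_2}}$. Critical contours correspond as well, by Proposition \ref{prop:1} (\ref{prop:1.1}). I would then reuse the local analysis of Cases 1-1 and 1-2 from that proof.

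Away from the images $p_{C_p}$ of non-normal contours, every point has a neighborhood as in Case 1-1: finitely many copies of $\{0 \leq x \leq 1\}$ glued at one point. That point is the image of a critical contour exactly when the contour is critical. So on the complement of the non-normal points, the Reeb-graph rule (vertices are exactly the critical contours) gives a locally finite $1$-dimensional CW structure. The closures of edges are $D^1$, $\{x>0\}$ or $\mathbb{R}$, and $\bar{{\pi}_{m+1,1} {\mid}_{X_{m,c_1,c_2}}}$ is injective on each edge; this follows from the embedding $e_{R}$ and the curves $C_e$ constructed at the end of the proof of Theorem \ref{thm:1}.

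For (\ref{thm:2.1}), the proof of Theorem \ref{thm:1} already shows that the non-normal points form a discrete set, and that adding them as vertices yields an E-W-A-digraph structure. The only question is whether this structure obeys the rule for Reeb digraphs, namely that the vertex set is exactly the set of critical contours. If every non-normal $C_p$ is critical, the added vertices are critical contours, and we obtain the Reeb digraph as an E-W-A-digraph. Conversely, suppose some non-normal $C_p$ is not critical. Then the rule forbids $p_{C_p}$ from being a vertex. However, by Case 1-2, $K(C_p)-C_p$ necessarily contains critical contours accumulating to $C_p$, and $p \in Z_{\rm F}$. Hence $p_{C_p}$ has infinitely many vertices in every neighborhood and infinitely many edges issuing from it, so $p_{C_p}$ cannot lie in the interior of a $1$-cell whose closure is $D^1$, $\{x>0\}$ or $\mathbb{R}$. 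So no E-W-A-digraph structure with the required vertex set exists.

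For (\ref{thm:2.2}), assume the situation of (\ref{thm:2.1}). If there are no non-normal contours, then every point is of Case 1-1. The structure is then locally finite, and the closure of each cell is a finite union of cells. The topology is coherent with the cells because it is locally that of a finite graph, so we get an E-A-digraph (in fact an E-digraph). Conversely, if a non-normal $C_p$ exists, Case 1-2 means infinitely many critical contours, hence infinitely many vertices, accumulate at the vertex $p_{C_p}$. I expect to show that the closure of some edge near $p_{C_p}$ then contains infinitely many cells, or that the weak topology fails at $p_{C_p}$, so the complex is not CW. The main obstacle is this step. I need to extract from the Case 1-2 description (where a fixed $C_p$ can never be isolated by $K(C_p)$) a concrete failure of the CW axioms. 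My first attempt is to pick a sequence of points $v_k$ on distinct edges converging to $p_{C_p}$ in $R$. The set $\{v_k\}$ meets each closed cell in a finite set, so it would be closed in the weak topology, but it is not closed in $R$, contradicting the second CW axiom.
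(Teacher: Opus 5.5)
Your route is the paper's. You pass to $R_{{\pi}_{2,1}{\mid}_{\overline{D_{c_1,c_2}}}}$ and use Cases 1-1/1-2, getting (1) from ``critical $\Rightarrow$ vertex, non-critical $\Rightarrow$ vertices accumulate'' and (2) from accumulation. Your weak-topology argument makes explicit what the paper only asserts, and you avoid the paper's detour through unbounded $C_p$ and characteristic sequences.

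\textbf{Main gap.} The step both proofs rest on is that Case 1-2 makes vertices accumulate at $p_{C_p}$ in $R$. Case 1-2 only says that every strip component $K(C_p)$ contains critical contours other than $C_p$. Since $C_p$ is non-compact, an open saturated set around $C_p$ may thin out towards infinity, so $q(K(C_p))$ need not lie in a given neighbourhood of $p_{C_p}$. For instance, let $c_1\equiv -1$ and let $c_2=f$ be smooth with $f\geq 0$ and $f^{-1}(0)=\mathbb{Z}$. On each $[k,k+1]$ let $f$ be a single hump of height $1$ whose only other critical point is a degenerate one $s_k$ with $f(s_k)=h_k:=1/(|k|+2)$. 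This is NTS with $Z_{\rm F}=\{0\}$. The contour $C_0=\{0\}\times\mathbb{R}$ is critical, since it contains $(c_2(k),k)$. It is non-normal, since every $K(C_0)$ contains the critical contours through $(h_k,s_k)$ for large $|k|$. Now fix any $0<\epsilon_k<h_k$. The set of points of $\overline{D_{c_1,c_2}}$ with either $-\frac{1}{2}<x_1\leq 0$, or $k<x_2<k+1$ and $0<x_1<\epsilon_k$ for some $k$, is open and saturated. Hence $p_{C_0}$ has a neighbourhood containing no other vertex. Its neighbourhoods are exactly those of the weak topology of an infinite wedge, so the Reeb digraph is an E-A-digraph even though a non-normal contour exists. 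So ``non-normal $\Rightarrow$ not E-A'' in (2) is false without a further hypothesis, and the paper's proof makes the same leap. You need a hypothesis forcing the Case 1-2 critical contours to approach $C_p$ in the quotient topology, e.g.\ that only finitely many components of $K(C_p)-C_p$ contain them. Once accumulation holds, your argument works. Using the set of all vertices $\neq p_{C_p}$ instead of a sequence avoids first-countability issues.

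\textbf{Smaller errors.} In (1), ``infinitely many edges issuing from $p_{C_p}$'' is false. In Theorem \ref{thm:4} (\ref{thm:4.1}) with $a>0$, the critical contours at levels $c_1(k\pi)$ accumulate along a single arc, and no edge has $p_{C_0}$ in its closure on that side. Also, accumulation of vertices alone does not keep a point out of an open $1$-cell of a non-CW cell complex. For non-critical $C_p$ an argument that works is as follows. $C_p$ has no touching points, so $K(C_p)\cap\{x_1>p\}$ and $K(C_p)\cap\{x_1<p\}$ are each connected. Suppose $p_{C_p}$ lay in an edge $A$, and consider the side carrying the Case 1-2 critical contours. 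The part of $A$ over that side is open, because nearby non-critical contours at levels $\neq p$ have arc neighbourhoods. It is also closed in the image of that side, hence equal to it, so it would contain vertices, a contradiction. Finally, ``finitely many copies'' in Case 1-1 and ``locally finite'' are wrong: take $c_1\equiv -1$, $c_2(x)=\sin^2(\pi x)$, $p=0$. So in the direction with no non-normal contours you must check the weak topology at vertices of infinite valence directly, and the result need not be an E-digraph.
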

\begin{proof}
We prove (\ref{thm:2.1}).

Suppose that all $(c_1,c_2,D_{c_1,c_2},X_{m,c_1,c_2})$-non-normal sets $C_p$ are critical. Under the assumption, they are mapped to single points $p_{C_p}$ of $R_{{\pi}_{2,1} {\mid}_{\overline{D_{c_1,c_2}}}}$ by the quotient map $q_{{\pi}_{2,1} {\mid}_{\overline{D_{c_1,c_2}}}}$. By the proof of Theorem \ref{thm:1}, the pair $(R_{{\pi}_{m+1,1} {\mid}_{X_{m,c_1,c_2}}},\bar{{\pi}_{m+1,1} {\mid}_{X_{m,c_1,c_2}}})$ is the Reeb digraph, where these single points are vertices of the Reeb digraph. 

On the contrary, suppose that there exists a $(c_1,c_2,D_{c_1,c_2},X_{m,c_1,c_2})$-non-normal set $C_p$ which is not critical. Suppose also that the pair $(R_{{\pi}_{m+1,1} {\mid}_{X_{m,c_1,c_2}}},\bar{{\pi}_{m+1,1} {\mid}_{X_{m,c_1,c_2}}})$ is the Reeb digraph. Under this assumption, this is mapped to a point $p_{C_p}$ which is not a vertex of the Reeb digraph which is an E-W-A-digraph. We find a contradiction.

We introduce two suitably chosen sufficiently small positive numbers ${\epsilon}_i$ ($i=1,2$).
Any small open and connected set in $\overline{D_{c_1,c_2}}$ containing $C_p$ must be mapped to a set of the form $(p-{\epsilon}_1,p+{\epsilon}_2):=\{p-{\epsilon}_1<x<p+{\epsilon}_2\}$ by ${\pi}_{2,1} {\mid}_{\overline{D_{c_1,c_2}}}$ if ${\pi}_{2,1} {\mid}_{\overline{D_{c_1,c_2}}}$ does not have local extremum on $C_p$. Any small open and connected set in $\overline{D_{c_1,c_2}}$ containing $C_p$ must be mapped to a set of either the form $(p-{\epsilon}_1,p):=\{p-{\epsilon}_1<x<p\}$ or  $(p,p+{\epsilon}_2):=\{p<x<p+{\epsilon}_2\}$ by ${\pi}_{2,1} {\mid}_{\overline{D_{c_1,c_2}}}$ if ${\pi}_{2,1} {\mid}_{\overline{D_{c_1,c_2}}}$ has a local extremum on $C_p$.

From this, the preimage of each neighborhood of the one-point set $\{p_{C_p}\}$ by the quotient map $q_{{\pi}_{2,1} {\mid}_{\overline{D_{c_1,c_2}}}}$ must contain countably many points of the form $(c_i(x)),x)$ ($x \in S(c_i)$). We introduce two suitably chosen sufficiently small positive numbers ${\epsilon}_i$ ($i=1,2$).
Any small open and connected set in $\overline{D_{c_1,c_2}}$ containing $C_p$ must be mapped to a set of the form $(p-{\epsilon}_1,p+{\epsilon}_2):=\{p-{\epsilon}_1<x<p+{\epsilon}_2\}$ by ${\pi}_{2,1} {\mid}_{\overline{D_{c_1,c_2}}}$ if ${\pi}_{2,1} {\mid}_{\overline{D_{c_1,c_2}}}$ does not have local extremum on $C_p$. Any small open and connected set in $\overline{D_{c_1,c_2}}$ containing $C_p$ must be mapped to a set of either the form $(p-{\epsilon}_1,p):=\{p-{\epsilon}_1<x<p\}$ or  $(p,p+{\epsilon}_2):=\{p<x<p+{\epsilon}_2\}$ by ${\pi}_{2,1} {\mid}_{\overline{D_{c_1,c_2}}}$ if ${\pi}_{2,1} {\mid}_{\overline{D_{c_1,c_2}}}$ has a local extremum on $C_p$.

From this, the preimage of each neighborhood of the one-point set $\{p_{C_p}\}$ by the quotient map $q_{{\pi}_{2,1} {\mid}_{\overline{D_{c_1,c_2}}}}$ must contain countably many points of the form $(c_i(x)),x)$ ($x \in S(c_i)$). %Each open connected neighborhood of $C_p$ in $\overline{D_{c_1,c_2}}$ must be mapped to a connected set $C_{p_{C_p}}$ which contains the one-point set $\{p_{C_p}\}$ as a proper subset, and the preimage ${q_{{\pi}_{2,1} {\mid}_{\overline{D_{c_1,c_2}}}}}^{-1}(C_{p_{C_p}})$ of the image $C_{p_{C_p}}$ must contain $C_p$ and at least one critical contour of ${\pi}_{2,1} {\mid}_{\overline{D_{c_1,c_2}}}$ which is not $C_p$ by the proof of Theorem \ref{thm:1} and our definition. 
This means that any neighborhood of $p_{C_p}$ in the E-W-A-graph must contain vertices of the E-W-A-graph and this is a contradiction.

This completes the proof of Theorem \ref{thm:2} (\ref{thm:2.1}).

We prove (\ref{thm:2.2}).

The Reeb digraph is an E-A-digraph if we do not have any $(c_1,c_2,D_{c_1,c_2},X_{m,c_1,c_2})$-non-normal $C_p$. Assume that we have some $(c_1,c_2,D_{c_1,c_2},X_{m,c_1,c_2})$-non-normal $C_p$. In this case, first, the following is satisfied.
\begin{itemize}
\item $C_p={{\pi}_{2,1}}^{-1}(p)$ and diffeomorphic to $\mathbb{R}$
\item $C_p=\{(p,x_2) \mid x_2 \leq p_{\rm m}\} \subset K(C_p)$ for some $p_{\rm m} \in \mathbb{R}$. 
\item $C_p=\{(p,x_2) \mid x_2 \geq p_{\rm M}\} \subset K(C_p)$ for some $p_{\rm M} \in \mathbb{R}$. 
\end{itemize}

Hereafter, we define ${\pi}_{2,2}:{\mathbb{R}}^2 \rightarrow \mathbb{R}$ as the projection onto the second component (${\pi}_{2,2}(x_1,x_2):=x_2$). In short, ${\pi}_{2,2}(C_p)$ is not bounded.

For $C_p$ and $i=1,2$, suppose the following. We do this to find a contradiction.
\begin{itemize}
\item Under the constraint that ${\pi}_{2,2}(C_p)$ is unbounded above,  ${c_i}^{-1}(p) \bigcap {\pi}_{2,2}(C_p)$ is unbounded above.
\item Under the constraint that ${\pi}_{2,2}(C_p)$ is unbounded below, ${c_i}^{-1}(p) \bigcap {\pi}_{2,2}(C_p)$ is unbounded below.
\end{itemize}
Under this assumption, the closure of each connected component of ${\pi}_{2,2}(C_p) \bigcap (\mathbb{R}-{c_i}^{-1}(p))$ in $\mathbb{R}$ must be always homeomorphic to $D^1$ and a bounded and closed subset of $\mathbb{R}$. By the assumption that $S(c_i) \subset \mathbb{R}$ is closed in $\mathbb{R}$ and the representation of $S(c_i)$, we can do as in Case 1-1 of the proof of Theorem \ref{thm:1}. This is a contradiction.

From this argument, at least one of the following four must hold.

\begin{itemize}
\item ${\pi}_{2,2}(C_p)$ is unbounded above. ${c_1}^{-1}(p) \bigcap {\pi}_{2,2}(C_p)$ is bounded above. There exists a sequence $\{x_{c_1,j}\}_{j=1}^{\infty} \subset S(c_1)$ diverging to $\infty$ and making $\{c_1(x_{c_1,j})\}$ converging to $p$.
\item ${\pi}_{2,2}(C_p)$ is unbounded above. ${c_2}^{-1}(p) \bigcap {\pi}_{2,2}(C_p)$ is bounded above. There exists a sequence $\{x_{c_2,j}\}_{j=1}^{\infty} \subset S(c_2)$ diverging to $\infty$ and making $\{c_2(x_{c_2,j})\}$ converging to $p$.
\item ${\pi}_{2,2}(C_p)$ is unbounded below. ${c_1}^{-1}(p) \bigcap {\pi}_{2,2}(C_p)$ is bounded below. There exists a sequence $\{x_{c_1,j}\}_{j=1}^{\infty} \subset S(c_1)$ diverging to $-\infty$ and making $\{c_1(x_{c_1,j})\}$ converging to $p$.
\item ${\pi}_{2,2}(C_p)$ is unbounded below. ${c_2}^{-1}(p) \bigcap {\pi}_{2,2}(C_p)$ is bounded below. There exists a sequence $\{x_{c_2,j}\}_{j=1}^{\infty} \subset S(c_2)$ diverging to $-\infty$ and making $\{c_2(x_{c_2,j})\}$ converging to $p$.
\end{itemize}

As in the proof of (\ref{thm:2.1}), we can prove that any neighborhood $C_{p_{C_p}}$ of the one-point set $\{p_{C_p}\}$ in the Reeb digraph, the set $C_p$ is mapped to by the quotient map, must contain vertices of the Reeb digraph (which is an E-W-A-digraph). This E-W-A-digraph is not an E-A-digraph.

This completes the proof of Theorem \ref{thm:2} (\ref{thm:2.2}).

This completes the proof.
\end{proof}
\begin{Def}
In the end of the proof of Theorem \ref{thm:2}, we call each sequence $\{x_{c_i,j}\}_{j=1}^{\infty} \subset S(c_i)$ a {\it characteristic sequence} at a $(c_1,c_2,D_{c_1,c_2},X_{m,c_1,c_2})$-non-normal set $C_p$. In addition, if for any characteristic sequence $\{x_{c_i,j}\}_{j=1}^{\infty} \subset S(c_i)$ at $C_p$, we can choose some connected set $I_{\{x_{c_i,j}\}_{j=1}^{\infty}}$ which is homeomorphic to $\{x>0\}$ and $\{x<0\}$, which contains some subsequence of $\{x_{c_i,j}\}_{j=1}^{\infty}$ and on which $c_i$ is strongly increasing or strongly decreasing, then we say that the $(c_1,c_2,D_{c_1,c_2},X_{m,c_1,c_2})$-non-normal set $C_p$ and the corresponding set $F_{C_p}$ are {\it mild}.  
\end{Def}
\begin{Thm}
\label{thm:3}	
In Theorems \ref{thm:1} and \ref{thm:2}, for the case where the pair $(R_{{\pi}_{m+1,1} {\mid}_{X_{m,c_1,c_2}}},\bar{{\pi}_{m+1,1} {\mid}_{X_{m,c_1,c_2}}})$ does not give the Reeb digraph, or gives the Reeb digraph which is not an E-A-digraph, the Reeb space $R_{{\pi}_{m+1,1} {\mid}_{X_{m,c_1,c_2}}}$ is homeomorphic to an E-A-graph if and only if each $(c_1,c_2,D_{c_1,c_2},X_{m,c_1,c_2})$-non-normal set $C_p$ and the corresponding contour $F_{C_p}$ of ${\pi}_{m+1,1} {\mid}_{X_{m,c_1,c_2}}$ are mild.

\end{Thm}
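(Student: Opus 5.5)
We reduce everything to the planar picture. As observed in the proof of Theorem \ref{thm:1}, $R_{{\pi}_{m+1,1} {\mid}_{X_{m,c_1,c_2}}}$ is homeomorphic to $R_{{\pi}_{2,1} {\mid}_{\overline{D_{c_1,c_2}}}}$, and away from the points $p_{C_p}$ coming from non-normal contours $C_p$ (Case 1-2) it is already a $1$-dimensional CW complex. So the question is purely local at each $p_{C_p}$: can the cell structure near $p_{C_p}$ be replaced by one in which $p_{C_p}$ has a neighborhood meeting only finitely many cells, each with closure homeomorphic to $D^1$, $\{x>0\}$ or $\mathbb{R}$, and with the weak topology? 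By the proof of Theorem \ref{thm:2}, $C_p={\pi}_{2,1}^{-1}(p)$ is a full line, and every neighborhood of $p_{C_p}$ contains images of critical points $(c_i(x_{c_i,j}),x_{c_i,j})$ along characteristic sequences $x_{c_i,j}\to\pm\infty$. We therefore study a one-sided neighborhood $N$ of $C_p$, namely the connected component of $K(C_p)$ with $C_p$ removed, together with its image in the Reeb space. We treat each end ($x_2\to+\infty$ or $x_2\to-\infty$) and each $i=1,2$ separately, using the four alternatives listed at the end of the proof of Theorem \ref{thm:2}.

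\emph{Mild implies E-A-graph.} Take a characteristic sequence with its interval $I$ (a half-line) on which $c_i$ is strictly monotone. Then the part of the boundary curve $\{(c_i(x),x)\mid x\in I\}$ is a graph over an interval whose $x_1$-projection tends monotonically to $p$. The critical points of $c_i$ outside $I$ on that end are finitely many near the level $p$, since $S(c_i)$ is closed and we are in the NTS situation. Hence, after shrinking $p_{C_p,{\rm l}},p_{C_p,{\rm L}}$, the strip contains no critical contour on that side except those in the finitely many problematic components. The apparent accumulation of vertices comes only from points where, after removing $I$, nothing branches. Concretely, with $c_i$ monotone on $I$, the contours of ${\pi}_{2,1}$ near $C_p$ do not split there. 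So in the Reeb space these infinitely many "vertices" are not genuine branch points: the local picture near $p_{C_p}$ is a finite union of arcs ending at $p_{C_p}$. We then declare $p_{C_p}$ a vertex and delete the non-branching points from the vertex set. We verify local finiteness at $p_{C_p}$ and the CW (weak topology) axiom by exhibiting a neighborhood homeomorphic to a finite wedge of half-open intervals, exactly as in Case 1-1 of Theorem \ref{thm:1}. Everywhere else, Theorem \ref{thm:1} already provides an E-A-structure, and the E-graph-type ends remain of the allowed types.

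\emph{Not mild implies not an E-A-graph.} Suppose some characteristic sequence $\{x_{c_i,j}\}$ admits no half-line containing a subsequence on which $c_i$ is strictly monotone. Then between consecutive terms $c_i$ has local extrema accumulating in value at $p$, and these produce contours that merge or split (genuine branch points of the Reeb space) at heights converging to $p$. We show these branch points converge to $p_{C_p}$ in $R_c$. Since the Reeb space is Hausdorff and branch points, i.e. points whose every small neighborhood is not an arc, are a topological invariant, any E-A-graph structure must include them as vertices. The $1$-cells joining them would then violate local finiteness at $p_{C_p}$, or violate the closure-finiteness/weak-topology condition. In the latter case a sequence of points, one taken in each small edge, would be closed in the weak topology but would converge to $p_{C_p}$. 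This contradicts the homeomorphism.

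The main obstacle is the converse direction: proving that non-monotonicity genuinely yields topological branch points accumulating at $p_{C_p}$, rather than merely removable critical points. I would first try to show that a failure of monotonicity on every tail forces infinitely many local maxima or minima of $c_i$ whose values tend to $p$. Then, using the local structure of ${\pi}_{2,1}$ near a local extremum of the boundary curve, I would show that each such extremum gives a vertex of degree $\neq 2$, or a degree-one end, in the Reeb space.
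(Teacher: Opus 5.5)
Your route is the paper's: reduce to $R_{{\pi}_{2,1}{\mid}_{\overline{D_{c_1,c_2}}}}$, work locally at each $p_{C_p}$, show that mildness makes a neighbourhood of $p_{C_p}$ a union of arcs, and show that non-mildness forces vertices to accumulate. The paper's proof is only a few lines and merely asserts the second half.

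\textbf{The converse is false under your reading.} The genuine gap is the step you yourself flag. With mildness read as eventual \emph{strict} monotonicity, the implication ``not mild $\Rightarrow$ not an E-A-graph'' is false, so it cannot be proved. TS explicitly allows components of $S(c_i)$ diffeomorphic to $D^1$, i.e.\ plateaus. A plateau inside a monotone stretch is a weak local extremum at which no contour splits. For instance, let $c_2\equiv p+1$, and let $c_1$ be smooth and non-decreasing with the following properties:
$c_1<p$; $c_1(x)\to p$ as $x\to+\infty$; $c_1(x)\to-\infty$ as $x\to-\infty$; $c_1\equiv p-\frac{1}{k}$ on $[k,k+\frac{1}{2}]$ for all integers $k\geq k_0$; and $c_1^{\prime}>0$ elsewhere.
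Then:
the quadruple is NTS with $Z_{\rm F}=\{p\}$; $C_p=\{p\}\times\mathbb{R}$ is non-normal and not critical; the integers $k\geq k_0$ form a characteristic sequence; and $c_1$ is strictly monotone on no tail, so $C_p$ is not mild.
Yet every level set of ${\pi}_{2,1}{\mid}_{\overline{D_{c_1,c_2}}}$ is a sublevel set of a non-decreasing function, hence connected. So $R_{{\pi}_{2,1}{\mid}_{\overline{D_{c_1,c_2}}}}$ is homeomorphic to $(-\infty,p+1]$, which is a graph. Your planned lemma (``each such extremum gives a vertex of degree $\neq 2$'') therefore fails. The ``only if'' direction holds only if \emph{strongly monotone} is read as weakly monotone; for the real analytic examples of Theorem \ref{thm:4} the two readings agree.

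\textbf{How to prove the converse under the weak reading.} Replace your count of extrema by the following argument.
On the relevant tail, $c_1<p$, because $c_1\leq p$ on $C_p$ and $c_1^{-1}(p)\cap{\pi}_{2,2}(C_p)$ is bounded above. At the same time $\sup c_1=p$ there, so $c_1$ cannot be eventually non-increasing.
Suppose it is not eventually non-decreasing either. Fix $x_0$ in the tail, choose $x_0<u<v$ with $c_1(u)>c_1(v)$ and $v$ arbitrarily large, and put $T=\max_{[x_0,v]}c_1$.
For $v$ large and levels $t$ slightly below $T$, the points $(t,x_0)$ and $(t,v)$ lie in distinct contours, since a maximizer of $c_1$ on $[x_0,v]$ separates them. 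These contours merge at level $T$.
Hence three arcs of the Reeb space, disjoint except for their common endpoint, end at the image of the contour through $(T,x_0)$. Such a point lies in no open $1$-cell, so it is a $0$-cell of any CW structure.
Since $T\to p$ as $v\to\infty$, these $0$-cells converge to $p_{C_p}$. This contradicts the fact that the $0$-skeleton of a CW complex is closed and discrete. The case of $c_2$ is symmetric, using minima.

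\textbf{Local finiteness is the wrong criterion, in both directions.} The obstruction above is the discreteness of the $0$-skeleton, not local finiteness, because E-A-graphs need not be locally finite. For the same reason, your mild direction should not claim a finite wedge. Suppose $c_2$ takes the value $p$ at minima tending to $+\infty$. This produces no characteristic sequence, because $c_2^{-1}(p)\cap{\pi}_{2,2}(C_p)$ is then unbounded. Yet infinitely many edges leave $p_{C_p}$, and what must be checked is that the quotient topology at $p_{C_p}$ is the weak topology.

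\textbf{A gap in the reduction step.} The identification of $R_c$ with $R_{{\pi}_{2,1}{\mid}_{\overline{D_{c_1,c_2}}}}$, which you import from Theorem \ref{thm:1}, needs $m\geq 3$. For $m=2$, a contour contained in $D_{c_1,c_2}$ lifts to two components.
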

\begin{proof}
By the proof of Theorems \ref{thm:1} and \ref{thm:2} and discussions on $(c_1,c_2,D_{c_1,c_2},X_{m,c_1,c_2})$-non-normal sets $C_p$, we can choose a connected open neighborhood $C_{p_{C_p}}$ in the Reeb space $R_{{\pi}_{m+1,1} {\mid}_{X_{m,c_1,c_2}}}$ homeomorphic to $\mathbb{R}$ and $\{-1<x<1\}$ of the one-point-set $\{p_{C_p}\}$, to which $C_p$ is mapped by the quotient map $q_{{\pi}_{2,1} {\mid}_{\overline{D_{c_1,c_2}}}}$, if $C_p$ and $F_{C_p}$ are mild. We can also see that if some $C_p$ ($F_{C_p}$) is not mild, then such a set makes $R_{{\pi}_{m+1,1} {\mid}_{X_{m,c_1,c_2}}}$ and $R_{{\pi}_{2,1} {\mid}_{\overline{D_{c_1,c_2}}}}$ spaces which are not homeomorphic to any E-A-graphs. This, with arguments on our proof, especially the argument of Case 1-1 in the proof of Theorem \ref{thm:1}, completes the proof.
\end{proof}
Corollary \ref{cor:1} is due to the definition.
\begin{Cor}
\label{cor:1}
Suppose the following.
\begin{itemize}
\item There exists a real number $x_{\rm m}$. The set $c_i(S(c_i))\bigcap \{x \leq x_{\rm m}\}$ is discrete and closed in either $\mathbb{R}$ or the complementary set of some suitably chosen one-point set $Z_{{\rm m},i}$ in $\mathbb{R}$.
\item There exists a real number $x_{\rm M}$. The set $c_i(S(c_i)) \bigcap \{x \geq x_{\rm M}\}$ is discrete and closed in either $\mathbb{R}$ or the complementary set of some suitably chosen one-point set $Z_{{\rm M},i}$ in $\mathbb{R}$.
\end{itemize}
 Then, a TS quadruple $(c_1,c_2,D_{c_1,c_2},X_{m,c_1,c_2})$ is NTS and the finite set $Z_{\rm F}$ in Definition \ref{def:1} can be chosen to be ${\bigcup}_{i=1}^2 (Z_{{\rm m},i} \bigcup Z_{{\rm M},i})$ and a finite discrete set of at most four elements, where each set $Z_{{\rm m},i}$ {\rm (}resp. $Z_{{\rm M},i}${\rm )} is empty if $c_i(S(c_i))\bigcap \{x \leq x_{\rm m}\}$ {\rm (}resp. $c_i(S(c_i))\bigcap \{x \geq x_{\rm M}\}${\rm )} is discrete and closed in $\mathbb{R}$.

We say that the present case $(c_1,c_2,D_{c_1,c_2},X_{m,c_1,c_2})$ is {\rm minimal} or {\rm MNTS} {\rm (}{\rm MNDC in the case NDC}{\rm )}.
\end{Cor}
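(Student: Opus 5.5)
The plan is to verify Corollary \ref{cor:1} directly against Definition \ref{def:1}, with no new geometric input. Set $Z_{\rm F}:={\bigcup}_{i=1}^2 (Z_{{\rm m},i} \bigcup Z_{{\rm M},i})$, a union of at most four one-point sets, so it is finite; hence it is discrete and closed in $\mathbb{R}$. We must then show that $V:=c_1(S(c_1)) \bigcup c_2(S(c_2))$ is discrete and closed in $\mathbb{R}-Z_{\rm F}$. Since the TS hypothesis already holds, this is exactly the NTS condition.

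Next we treat both ends of the real line. Since discreteness and closedness in an open subset are local properties, it suffices to check, for each point $y \in \mathbb{R}-Z_{\rm F}$, that some neighborhood of $y$ meets $V$ in finitely many points. The difficulty is that the hypotheses are stated for the value sets intersected with half-lines $\{x \leq x_{\rm m}\}$ and $\{x \geq x_{\rm M}\}$.

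Here I would read the half-line constraints as constraints on the \emph{parameter} $x \in S(c_i)$, i.e. on the sets $c_i(S(c_i) \bigcap \{x \leq x_{\rm m}\})$ and $c_i(S(c_i)\bigcap\{x \geq x_{\rm M}\})$. This reading is the one consistent with the characteristic sequences of Theorem \ref{thm:2} diverging to $\pm\infty$. On the middle piece $S(c_i) \bigcap [x_{\rm m},x_{\rm M}]$, the set is compact, since $S(c_i)$ is closed. By the TS assumption, it is a finite union of points and closed intervals, because only finitely many components of a locally finite decomposition meet a compact set; this local finiteness is the point I would justify from the TS description, where each component is a point or an interval with nonempty interior and they are closed and disjoint. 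Since $c_i$ is constant on each interval component of $S(c_i)$ (its derivative vanishes there), $c_i$ of the middle piece is a finite set.

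Consequently $V$ is the union of a finite set and four sets, each discrete and closed in $\mathbb{R}-Z_{{\rm m},i}$ or $\mathbb{R}-Z_{{\rm M},i}$. Each of these is therefore discrete and closed in the smaller open set $\mathbb{R}-Z_{\rm F}$, and a finite union of sets that are discrete and closed in a space is again discrete and closed. The empty-set clause follows from the same argument whenever the corresponding piece is already discrete and closed in $\mathbb{R}$.

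The main obstacle is the bookkeeping about the middle piece and about interpreting the hypotheses. In particular, it needs the finiteness of the critical values on compact parameter ranges, which uses both closedness of $S(c_i)$ and the TS decomposition. The rest is purely formal.
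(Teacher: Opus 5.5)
Your architecture is the natural unpacking of the paper's one-line justification (``due to the definition''). You take $Z_{\rm F}$ to be the union of the at most four points, split the critical value set into four end pieces and a middle piece, and use that a finite union of locally finite subsets of $\mathbb{R}-Z_{\rm F}$ is locally finite. Your parameter-space reading of the half-lines is the intended one: Theorem \ref{thm:4} declares $Z_{{\rm M},1}$ empty precisely because $c_1(x)\to-\infty$ as $x\to+\infty$. The paper never mentions the middle piece, and you are right that it has to be handled. However, the justification you propose for it does not work.

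\textbf{The gap.} You want only finitely many components of $S(c_i)$ to meet $[x_{\rm m},x_{\rm M}]$, and you plan to derive this from the components being points or intervals, closed, and pairwise disjoint. That does not give local finiteness. The set $S:=\{0\}\bigcup\{1/n \mid n\geq 1\}$ is closed and is a disjoint union of one-point sets, yet it has infinitely many components in $[0,1]$.

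This is not a technicality. Take a smooth $h\geq 0$ with $h^{-1}(0)=S$ (every closed subset of $\mathbb{R}$ is the zero set of a smooth function), and put $c_1(x):={\int}_0^x h(t)dt$ and $c_2:=c_1+1$. With $x_{\rm m}=-1$ and $x_{\rm M}=2$, the end hypotheses hold with all $Z_{{\rm m},i}$ and $Z_{{\rm M},i}$ empty. But $c_1$ is strictly increasing, so the values $c_1(1/n)$ accumulate at $c_1(0)$, and $c_1(S(c_1))$ is not discrete in $\mathbb{R}=\mathbb{R}-Z_{\rm F}$. So if ``disjoint union'' in the TS condition of Definition \ref{def:1} only means a partition into such connected sets, which every closed subset of $\mathbb{R}$ admits, then both your middle step and the corollary itself are false.

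\textbf{The fix.} You must state explicitly the reading of TS that you use: $S(c_i)$ is the topological sum of its components, i.e.\ each component is open in $S(c_i)$. Under that reading the compact set $S(c_i)\bigcap[x_{\rm m},x_{\rm M}]$ is covered by these pairwise disjoint, relatively open components, so it meets only finitely many of them. Since $c_i$ is constant on each component, the middle piece contributes finitely many values, and the rest of your argument goes through.
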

We present examples for this.
\begin{Ex}
	\label{ex:1}
\begin{enumerate}
\item \label{ex:1.1} A PTS quadruple $(c_1,c_2,D_{c_1,c_2},X_{m,c_1,c_2})$ is also minimal. 
\item \label{ex:1.2} In \cite{kitazawa9}, certain explicit and specific cases with $Z_{{\rm m},1}=Z_{{\rm m},2}$ and $Z_{{\rm M},1}=Z_{{\rm M},2}$ are studied (, where $(c_1,c_2,D_{c_1,c_2},X_{m,c_1,c_2})$ is not assumed to be TS). Explicit cases are presented additionally and mainly in \cite{kitazawa10} by explicit real analytic functions $c_i$ and these examples yield MNTS (MNDC) $(c_1,c_2,D_{c_1,c_2},X_{m,c_1,c_2})$. 
\item \label{ex:1.3} We present an NTS (NDC) quadruple $(c_1,c_2,D_{c_1,c_2},X_{m,c_1,c_2})$ which is not minimal. Let $c_1(x):=\sin (x)+\frac{2+\sin {(e^{x^2})}}{4(x^2+1)}$ and let $c_2(x):=\sin (x)+1$. 
The functions $c_1(x)$ and $c_2(x)$ are bounded.
The 1st derivative ${c_1}^{\prime}(x)$ is calculated as ${c_1}^{\prime}(x)=\cos x+\frac{8xe^{x^2}\cos (e^{x^2})\times (x^2+1)-8x(2+\sin {(e^{x^2})})}{16{(x^2+1)}^2}$ and we have sequences $\{x_{+\infty,\pm \infty,j}\}_{j=1}^{\infty}$ ($\{x_{-\infty,\pm \infty,j}\}_{j=1}^{\infty}$) diverging to $+\infty$ (resp. $-\infty$) in the following.
\begin{itemize}
\item The sequence $\{{c_1}^{\prime}(x_{+\infty,+\infty,j})\}_{j=1}^{\infty}$ diverges to $+\infty$ and the sequence $\{{c_1}^{\prime}(x_{+\infty,-\infty,j})\}_{j=1}^{\infty}$ diverges to $-\infty$.
\item The sequence $\{{c_1}^{\prime}(x_{-\infty,+\infty,j})\}_{j=1}^{\infty}$ diverges to $+\infty$ and the sequence $\{{c_1}^{\prime}(x_{-\infty,-\infty,j})\}_{j=1}^{\infty}$ diverges to $-\infty$.
\end{itemize}
Note that this argument on derivatives comes from \cite[Propositions 1 and 2]{kitazawa10} and related arguments. We have a case for $Z_{\rm F}=\{-1,1\}$, we cannot remove any element from $Z_{\rm F}$, and $c_1(S(c_1))\bigcap \{x \leq x_{\rm m}\}$ and $c_1(S(c_1))\bigcap \{x \geq x_{\rm M}\}$ are discrete and closed in $\mathbb{R}-Z_{\rm F}$, where the set $\mathbb{R}-Z_{\rm F} \subset \mathbb{R}$ cannot be larger in this argument on discreteness and closed sets. In this case, each contour of ${\pi}_{2,1} {\mid}_{\overline{D_{c_1,c_2}}}$ is compact and homeomorphic to $D^1$. By the argument of the proof of Theorem \ref{thm:1}, the Reeb digraph is an E-A-digraph.
\end{enumerate}
\end{Ex}
\begin{Thm}
\label{thm:4}
We have the following MNDC quadruples $(c_1,c_2,D_{c_1,c_2},X_{m,c_1,c_2})$ with real analytic functions $c_1$ and $c_2$.
\begin{enumerate}
\item  \label{thm:4.1}
Let $P:\mathbb{R} \rightarrow \mathbb{R}$ be a real analytic function with $P(x) \geq 0$ {\rm (}$x \in \mathbb{R}${\rm )} such that the set $\{x \in \mathbb{R} \mid P(x)=0\}$ is unbounded below.
Suppose that ${\int}_{-\infty}^{\infty} e^{t}P(t)dt$ is also defined.
 Let $c_1(x):=-{\int}_{-\infty}^{x} e^{t}P(t)dt$ and $c_2(x):=c_{2,a}(x):=\frac{x^2}{x^4+1}+a$ with $a \geq 0$. 

This is the case such that $Z_{{\rm m},2}$ and $Z_{{\rm M},2}$ are empty, that $Z_{{\rm m},1}=\{0\}$, and that $Z_{{\rm M},1}$ is empty or a one-point set consisting of exactly one positive number. For example, if $P(x)={\sin}^2 x$, then the value $c_1(x)$ diverges to $-\infty$ as $x$ diverges to $+\infty$ and $Z_{{\rm M},1}$ is empty.

This is a case with exactly one $(c_1,c_2,D_{c_1,c_2},X_{m,c_1,c_2})$-non-normal set $C_0$. Furthermore, the set $C_0$ is mild. This is also of Theorem \ref{thm:3} and the Reeb space is homeomorphic to an E-A-graph. The case $a=0$ gives the Reeb-digraph case of Theorem \ref{thm:2} {\rm (}\ref{thm:2.1}{\rm )}, where the case $a>0$ does not give.
\item \label{thm:4.2}
Let $P:\mathbb{R} \rightarrow \mathbb{R}$ be a real analytic function with $P(x) \geq 0$ {\rm (}$x \in \mathbb{R}${\rm )} such that the set $\{x \in \mathbb{R} \mid P(x)=0\}$ is unbounded below. Suppose that ${\int}_{-\infty}^{\infty} e^{-t^2}P(t)dt$ is also defined. Let $c_1(x):=-{\int}_{-\infty}^{x} e^{-t^2}P(t)dt$ and $c_2(x):=c_{2,a}(x):=\frac{x^2}{x^4+1}+a$ with $a \geq 0$. This is the case such that $Z_{{\rm m},2}$ and $Z_{{\rm M},2}$ are empty, that $Z_{{\rm m},1}=\{0\}$, and that $Z_{{\rm M},1}$ is a one-point set consisting of exactly one positive number. 

This is a case with exactly one $(c_1,c_2,D_{c_1,c_2},X_{m,c_1,c_2})$-non-normal set $C_0$. Furthermore, the set $C_0$ is mild. This is also of Theorem \ref{thm:3} and the Reeb space is homeomorphic to an E-A-graph. The case $a=0$ gives the Reeb-digraph case of Theorem \ref{thm:2} {\rm (}\ref{thm:2.1}{\rm )}, where the case $a>0$ does not give.
\item \label{thm:4.3}
Let $P:\mathbb{R} \rightarrow \mathbb{R}$ be a real analytic function such that $Q(x) \geq 0$ for some real number $t_Q$ and $x \leq t_Q$, that $Q(x) \leq 0$ for $x \geq t_Q$, and that the set $\{x \in \mathbb{R} \mid Q(x)=0\}$ is unbounded. 
Suppose that ${\int}_{-\infty}^{\infty} e^{-t^2}Q(t)dt$ is defined as a non-negative real number. Let $c_1(x):=-{\int}_{-\infty}^{x} e^{-t^2}Q(t)dt$ and $c_2(x):=c_{2,a,b}(x):=\frac{bx^2}{x^4+1}+a$ with $a \geq 0$ and a sufficiently large positive number $b$, respecting the behavior of $Q(x)$ and $a>0$. This is the case such that $Z_{{\rm m},2}$ and $Z_{{\rm M},2}$ are empty, that $Z_{{\rm m},1}=\{0\}$, and that $Z_{{\rm M},1}$ is a one-point set consisting of exactly one non-negative number.

This is a case with exactly one $(c_1,c_2,D_{c_1,c_2},X_{m,c_1,c_2})$-non-normal set $C_0$ or exactly two ones $C_{0}$ and $C_{{\int}_{-\infty}^{\infty} e^{-t^2}Q(t)dt}$, where in the case ${\int}_{-\infty}^{\infty} e^{-t^2}Q(t)dt=0$, the notation of the latter set must be changed. Furthermore, especially, these sets $C_p$ are mild. This is also of Theorem \ref{thm:3} and the Reeb space is homeomorphic to an E-A-graph. The case $a=0$ and ${\int}_{-\infty}^{\infty} e^{-t^2}Q(t)dt=0$ such as the case $Q(x):=x \ {\sin}^2 x$ gives the Reeb-digraph case of Theorem \ref{thm:2} {\rm (}\ref{thm:2.1}{\rm )}, where the other cases {\rm (}such as the case $Q(x):=(x+t_0) {\sin}^2 x$ with a real number $t_0>0${\rm )} do not give.
\end{enumerate}
\end{Thm}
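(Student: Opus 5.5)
The plan is to verify each case by computing the critical sets and critical values of $c_1$ and $c_2$ directly, locating the non-normal contours, and then applying Theorems \ref{thm:1}, \ref{thm:2} and \ref{thm:3}. We begin with case (\ref{thm:4.1}) and adapt the argument to the other two.

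\textbf{Analysis of $c_2$.} The function $c_{2,a}(x)=\frac{x^2}{x^4+1}+a$ (or $\frac{bx^2}{x^4+1}+a$) is real analytic. Its derivative is $\frac{2x(1-x^4)}{(x^4+1)^2}$, up to the factor $b$, so it has exactly three critical points, $x=0,\pm 1$. Hence $c_2(S(c_2))$ is finite, and $Z_{{\rm m},2}$ and $Z_{{\rm M},2}$ are empty.

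\textbf{Analysis of $c_1$.} Here $c_1'(x)=-e^{x}P(x)\leq 0$ (resp. $-e^{-x^2}P(x)$), so $c_1$ is weakly decreasing and $S(c_1)=P^{-1}(0)$. Since $P\geq 0$ is real analytic and not identically zero, this zero set is discrete and closed. Every zero of $P$ has even order, so $c_1$ is strictly decreasing and each point of $S(c_1)$ is an inflection-type critical point. Since $c_1(x)\to 0$ as $x\to-\infty$ and $P^{-1}(0)$ is unbounded below, the critical values accumulate only at $0$ from below, and $Z_{{\rm m},1}=\{0\}$. As $x\to+\infty$, $c_1$ either tends to $-\int_{-\infty}^{\infty}e^{t}P(t)dt$ or diverges to $-\infty$ (the latter for $P=\sin^2$, since $\int e^t\sin^2 t\,dt$ diverges). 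Accordingly, $Z_{{\rm M},1}$ is either a single point or empty; if $P^{-1}(0)$ is bounded above, it is empty in any case. The point is stated as positive, but since $c_1<0$, it is really the limit value $-\int$. For case (\ref{thm:4.2}), the Gaussian weight makes both limits finite, so $Z_{{\rm M},1}$ is exactly one point. For case (\ref{thm:4.3}), $c_1$ increases and then decreases around $t_Q$. We also need $c_1<c_2$, and we ensure it by choosing $b$ large, which is why $b$ is required there. In cases (\ref{thm:4.1}) and (\ref{thm:4.2}), $c_1<0\leq a\leq c_2$ holds automatically, except that $c_2(0)=a$. Thus for $a=0$ the graphs still do not meet, since $c_1(0)<0$. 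By Corollary \ref{cor:1}, each quadruple is MNDC.

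\textbf{Locating the non-normal sets.} The only candidates for Case 1-2 contours are the vertical lines at values in $Z_{\rm F}$. We check which of these actually lie inside $\overline{D_{c_1,c_2}}$ along an unbounded $x_2$-ray. At $p=0$, the contour $C_0$ contains $\{(0,x_2)\mid x_2\leq -R\}$, because $c_1<0<c_2$ for $x_2$ near $-\infty$. It is non-normal because critical values of $c_1$ accumulate there, with critical points tending to $-\infty$; this gives the characteristic sequence of the end of the proof of Theorem \ref{thm:2}. The value $p=-\int$, in contrast, is only a limit along $x_2\to+\infty$ of $c_1$, where $c_1$ stays strictly above it. The vertical line at this value then lies outside $\overline{D_{c_1,c_2}}$ near $+\infty$, so no non-normal contour arises. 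In case (\ref{thm:4.3}) with nonnegative integral, this second value is approached from the appropriate side and gives the possible second non-normal set.

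\textbf{Mildness and the digraph question.} Mildness holds because $c_1$ is strictly monotone on a ray $(-\infty,x_0)$, which contains any characteristic sequence; Theorem \ref{thm:3} then yields an E-A-graph. For the digraph question we apply Theorem \ref{thm:2} (\ref{thm:2.1}): $C_0$ is critical if and only if it contains a critical point $(c_2(x),x)$ with $c_2(x)=0$. By Proposition \ref{prop:1}, this happens exactly when $a=0$, namely at $x=0$. For $a>0$, $C_0$ is not critical.

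\textbf{Main obstacle.} The hardest step is verifying in case (\ref{thm:4.3}) the side on which the limit value is approached, and confirming the containment $c_1<c_2$ for large $b$.
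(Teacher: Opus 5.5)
Your treatment of (\ref{thm:4.1}) and (\ref{thm:4.2}) is sound. It is the direct computation the paper alludes to; the paper itself omits the proof. In these cases $c_1$ strictly decreases from $0$, and $C_0=\{0\}\times\mathbb{R}$ is the only non-normal contour. It is mild, and it is critical iff $a=c_2(0)=0$. You are also right that the accumulation value at $+\infty$ is $-\int_{-\infty}^{\infty}e^{t}P(t)dt<0$, not a positive number. Two small corrections:
\begin{itemize}
\item The line $x_1=-\int_{-\infty}^{\infty}e^{t}P(t)dt$ misses $\overline{D_{c_1,c_2}}$ entirely, because $c_1$ lies strictly above this value everywhere. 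Saying it lies ``outside near $+\infty$'' would not by itself exclude a contour.
\item In (\ref{thm:4.2}), a finite limit makes $Z_{{\rm M},1}$ a point only if $P^{-1}(0)$ is also unbounded above.
\end{itemize}

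\textbf{Monotonicity of $c_1$ in (\ref{thm:4.3}).} Here is the genuine gap. Since $c_1'=-e^{-x^2}Q$, the stated hypothesis makes $c_1$ strictly \emph{decrease} from $0$ to $c_1(t_Q)<0$ and then strictly increase to $-J$, where $J:=\int_{-\infty}^{\infty}e^{-t^2}Q(t)dt\ge 0$. You have this reversed. With the correct picture, $c_1(x)=-J+\int_x^{\infty}e^{-t^2}Q(t)dt<-J\le 0$ on $[t_Q,\infty)$. So $c_1<0\le c_2$ holds automatically and $b$ plays no role. In your picture, a large $b$ could not ensure $c_1<c_2$ anyway, since $c_2(0)=a$ does not depend on $b$.

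\textbf{Non-normal sets in (\ref{thm:4.3}).} Your picture also destroys the non-normal sets you then assert. If $c_1$ rose first, then $c_1>0$ near $-\infty$, so $C_0$ contains no lower ray. Also $c_1$ would tend to $-J$ from above, so by your own argument from (\ref{thm:4.1}) the line $x_1=-J$ would miss $\overline{D_{c_1,c_2}}$. Neither asserted non-normal set would arise. ``Approached from the appropriate side'' is exactly what must be proved. The correct argument runs as follows:
\begin{itemize}
\item $c_1(s)\uparrow -J$ for $s\in Q^{-1}(0)$, $s\to+\infty$.
\item The line $x_1=-J$ meets $\overline{D_{c_1,c_2}}$ in the ray $\{x_2\ge\gamma\}$, where $\gamma<t_Q$ is the unique point with $c_1(\gamma)=-J$.
\item For $x_1$ slightly below $-J$, the slices are intervals $[\gamma(x_1),\delta(x_1)]$ with $\delta(x_1)\to\infty$. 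So a thin strip around $x_1=-J$ is connected and contains the critical points $(c_1(s),s)$.
\end{itemize}
Hence the second non-normal set is $C_{-J}$, not $C_J$. It is mild because $c_1$ is strictly increasing on $(t_Q,\infty)$. You also need this ray for $C_0$ when $J=0$, not only a ray $(-\infty,x_0)$.

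\textbf{The digraph claim in (\ref{thm:4.3}).} Your criterion only examines $C_0$. When $J>0$, Theorem \ref{thm:2} (\ref{thm:2.1}) also requires deciding whether $C_{-J}$ is critical. It meets the graphs of $c_1,c_2$ only at $(-J,\gamma)$, which is critical iff $Q(\gamma)=0$. This does happen. Take $Q(x)=(t_0-x)\sin^2x$, with $t_0>0$ chosen so that $\int_0^{\infty}e^{-t^2}(t_0-t)\sin^2t\,dt=0$. Then $\gamma=0$ is a zero of $Q$ and $J=t_0\int_{-\infty}^{\infty}e^{-t^2}\sin^2t\,dt>0$. So with $a=0$, every non-normal set is critical even though $J>0$. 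Thus ``the other cases do not give'' cannot be derived without an extra assumption such as $Q(\gamma)\neq 0$.
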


Main ingredients of its proof consist of direct calculations and routine observations. We omit its precise proof.

The following are a kind of our additional remark.
\begin{Rem}
	\label{rem:1}
By our situation, in Theorem \ref{thm:4}, "E-A-graphs" are "E-graphs". For Example \ref{ex:1} (\ref{ex:1.3}), this also holds. 
\end{Rem}
\begin{Rem}
	\label{rem:2}
These "E-graphs" are shown to be graphs by applying arguments in \cite{kitazawa9}, in the case $P(x)={\sin}^2 x$ and $Q(x)=(x+t_1){\sin}^2 x$ with $t_1 \geq  0$ for example, in Theorem \ref{thm:4}. This holds for Example \ref{ex:1} (\ref{ex:1.3}). This is also related to Example \ref{ex:1} (\ref{ex:1.2}).
\end{Rem}
We close this paper by presenting several problems.

Problems similar to Problem \ref{prob:2} are also presented in the end of \cite{kitazawa9} for example and some are solved explicitly in \cite{kitazawa9} and later in \cite{kitazawa10} by controlling derivatives of functions $c:\mathbb{R} \rightarrow \mathbb{R}$ at each infinity explicitly.
\begin{Prob}
	\label{prob:2}
In Theorem \ref{thm:4}, we have presented cases of Theorem \ref{thm:3} such that the Reeb spaces are homeomorphic to E-A-graphs, E-graphs, and graphs (, according to Remark \ref{rem:2}).
\begin{enumerate}
\item In some cases of Theorem \ref{thm:4}, such as the cases presented in Theorem \ref{thm:4} (\ref{thm:4.2}, \ref{thm:4.3}), the functions $c_1$ seem to be real analytic functions which are not elementary functions. Can we obtain similar cases of our Theorem \ref{thm:4} by single elementary functions?, 
\item Can we find examples of real analytic functions for explicit cases for our Theorem \ref{thm:4} we do not investigate in this paper?
\end{enumerate}
\end{Prob}
Problem \ref{prob:3} is related to the structures of Reeb digraphs of smooth functions. Different from cases mainly studied in the present paper, this is on proper smooth functions.
\begin{Prob}
	\label{prob:3}
What is a necessary and sufficient condition for $(R_c,\bar{c})$ to be the Reeb digraph of a smooth function $c:X \rightarrow \mathbb{R}$ on a closed and connected manifold $X$?

 If we restrict the class of $1$-dimensional compact and connected cell complexes to the class of (finite) digraphs, then it is a necessary and sufficient condition for this that the singular value set $c(S(c)) \subset \mathbb{R}$ is a finite set (\cite[Theorem 3.1]{saeki1}).

According to \cite{gelbukh1} (\cite[Theorem 7.5]{gelbukh1}), for a smooth function $c$ on  a closed and connected manifold $X$, $R_c$ is homeomorphic to a so-called ($1$-dimensional) {\it Peano continua} which is metrizable. A {\it Peano continuum} is a compact, connected and locally connected Hausdorff space. In \cite{saeki1, saeki2}, Reeb spaces of smooth functions which are (1-dimensional and metrizable) Peano continua are presented.

In \cite[Theorem 2]{kitazawa7}, the author has presented an ($m_1+m_2$)-dimensional smooth compact and connected submanifold $X_{m_1,m_2}$ of ${\mathbb{R}}^{m_1+m_2+2}$ with no boundary and with positive integers $m_1>0 $ and $m_2>0$ which is the zero set of some smooth function $e_{m_1,m_2}:{\mathbb{R}}^{m_1+m_2+2} \rightarrow \mathbb{R}$ being real analytic outside set $Z_{m_1,m_2} \subset {\mathbb{R}}^{m_1+m_2+2}$ of Lebesgue measure zero. In addition, the author has investigated the Reeb space of the function represented as the restriction ${\pi}_{m+2,1} {\mid}_{X_{m_1,m_2}}$. The Reeb space of this function of the author is shown to be homeomorphic to a Peano-continuum and $(R_{{\pi}_{m+2,1} {\mid}_{X_{m_1,m_2}}},\bar{{\pi}_{m+2,1} {\mid}_{X_{m_1,m_2}}})$ is the Reeb digraph which is also a W-A-digraph and not a digraph, in terminologies of this paper.  
\end{Prob}
 \section{Conflict of interest and Data availability.}
  \noindent {\bf Conflict of interest.} \\
 The author is a researcher at Osaka Central Advanced Mathematical Institute, or a member of OCAMI researchers. This is supported by MEXT Promotion of Distinctive Joint Research Center Program JPMXP0723833165. He thanks this opportunity. He is not employed there. \\
  %Some of works by other researchers and this version may overlap in some of the contents due to the nature that our problems are natural in theory of Morse functions and applications to differential topology and that related mathematical studies are very fundamental and classical in some senses, for example. However the present version of our paper is presented independent of these work. \\
  %Saga Souhatsu Mathematical Seminar (http://inasa.ms.saga-u.ac.jp/Japanese/saga-souhatsu.html), inviting the author as a speaker, is funded and supported by JST Fusion Oriented REsearch for disruptive Science and Technology JPMJFR202U: the author was a speaker on 2024/7/12 supported by this project.\\
  \ \\
  {\bf Data availability.} \\
 No other data is generated. We do not assume non-trivial arguments in preprints being unpublished. To some extent, we may refer to these preprints when we need.

\end{document}